\documentclass[10pt]{article}

\usepackage{amsthm}
\usepackage{amsmath}
\usepackage{amsfonts}
\usepackage{amssymb}
\usepackage{mathrsfs}
\usepackage{empheq}
\usepackage{stmaryrd}
\usepackage{dsfont}
\usepackage{enumerate}
\usepackage{cancel}
\usepackage{comment}
\usepackage{pgfplots}
\usepackage{soul}
\usepackage{multirow}
\usetikzlibrary{patterns} 
\newcommand{\hide}[1]{}
\usepackage[applemac]{inputenc}
\usepackage{hyperref}
\usepackage{xcolor}

\definecolor{mygreen}{rgb}{0, 0.5, 0}

\newcommand{\1}{\mathds{1}}

\newcommand{\N}{\mathbb{N}}

\newcommand{\R}{\mathbb{R}}
\newcommand{\C}{\mathbb{C}}

\newcommand{\Dr}{\mathscr{D}}

\newcommand{\Lr}{\mathscr{L}}

\newcommand{\Or}{\mathscr{Or}}

\newcommand{\vphi}{\varphi}
\newcommand{\eps}{\varepsilon}

\newcommand{\ovl}{\overline}

\newcommand{\vlim}{\lim\limits}

\newcommand{\vint}{\int\limits}

\newcommand{\tends}{\longrightarrow}

\newcommand{\wt}{\widetilde}

\newcommand{\loc}{\mathrm{loc}}

\renewcommand{\d}{\mathrm{d}}

\newcommand{\M}{\mathrm{max}}
\newcommand{\GN}{\mathrm{GN}}

\newcommand{\vi}{\mathrm{i}}
\newcommand{\e}{\mathrm{e}}

\renewcommand{\le}{\leqslant}
\renewcommand{\ge}{\geqslant}
\renewcommand{\Re}{\mathrm{Re}}
\renewcommand{\Im}{\mathrm{Im}}
\newcommand{\bs}{\boldsymbol}
\newcommand{\p}{\prime}

\newcommand{\eqdef}{\stackrel{\mathrm{def}}{=}}
\DeclareMathOperator{\supp}{supp}

\numberwithin{equation}{section}

\newtheorem{thm}{Theorem}[section]
\newtheorem{prop}[thm]{Proposition}

\newtheorem{lem}[thm]{Lemma}

\theoremstyle{definition}
\newtheorem{rmk}[thm]{Remark}
\newtheorem{defi}[thm]{Definition}

\newtheorem{assum}[thm]{Assumption}

\newenvironment{proof*}{\noindent{\bf Proof.}}{\qed}
\newenvironment{vproof}[1]{\noindent{\bf Proof #1}}{\qed}

\title{\huge \sc Damped nonlinear Ginzburg--Landau equation with saturation. Part II. Strong Stabilization}
\author{\sc Pascal Bégout$^*$ and Jes\'us Ildefonso D{\'{\i}}az$^\dagger$}
\date{}

\begin{document}

\maketitle

\begin{center}
\begin{tabular}{ll}
\hspace*{-.28cm}$^*$ Toulouse School of Economics	&	\hspace*{-.25cm}$^\dagger$ Instituto de Matem\'atica Interdisciplinar	\\
Université Toulouse Capitole 						&	Universidad Complutense de Madrid								\\
Institut de Mathématiques de Toulouse 				&	Plaza de las Ciencias, 3										\\
1, Esplanade de l'Université 						&	28040 Madrid, SPAIN										\\
31080 Toulouse Cedex 6, FRANCE					& 	\\
{\footnotesize E-mail\:: \href{mailto:Pascal.Begout@math.cnrs.fr}{\texttt{Pascal.Begout@math.cnrs.fr}}}
&
{\footnotesize E-mail\:: \href{mailto:jidiaz@ucm.es}{\texttt{jidiaz@ucm.es}}}
\end{tabular}
\end{center}

\begin{abstract}
We study the complex Ginzburg--Landau equation posed on possibly unbounded domains, including some singular and saturated nonlinear damping terms. This model interpolates between the nonlinear Schr\"odinger equation and dissipative parabolic dynamics through a complex time-derivative prefactor, capturing the interplay between dispersion and dissipation. As a continuation of our previous study on the existence and uniqueness of solutions, we prove here some strong stabilization properties. In particular, we show the finite time extinction of solutions induced by the nonlinear saturation mechanism, which, sometimes, can be understood as a bang-bang control. The analysis relies on refined energy methods. Our results provide a rigorous justification of nonlinear dissipation as an effective stabilization mechanism for this class of complex equations where the maximum principle fails.
\end{abstract}

{\let\thefootnote\relax\footnotetext{Pascal Bégout acknowledges funding from ANR under grant ANR-17-EUR-0010 (Investissements d'Avenir program)}}
{\let\thefootnote\relax\footnotetext{The research of J.\,I.\:D\'{\i}az was partially supported by the project PID-2020-112517GBI00 of the AEI and MCIU/AEI/10.13039/-501100011033/FEDER, EU}}
{\let\thefootnote\relax\footnotetext{$^*$\href{https://orcid.org/0000-0002-9172-3057}{https://orcid.org/0000-0002-9172-3057}}}
{\let\thefootnote\relax\footnotetext{$^\dagger$\href{https://orcid.org/0000-0003-1730-9509}{https://orcid.org/0000-0003-1730-9509}}}
{\let\thefootnote\relax\footnotetext{2020 Mathematics Subject Classification: 35Q56 (35B40, 93D40)}}
{\let\thefootnote\relax\footnotetext{Keywords: Damped Ginzburg--Landau equation, Saturated nonlinearity, Finite time extinction}}

\tableofcontents

\baselineskip .6cm

\section{Introduction}
\label{introduction}

The complex Ginzburg--Landau equation constitutes one of the most fundamental models in the theory of nonlinear dissipative systems. For a more detailed introduction to the model we will consider in this paper we send the reader to the Part I of our study (see \cite{Part1}).

\bigskip
\noindent
In several recent works (see, e.g., \cite{MR4725781}), the strong stabilization of a damped nonlinear Schr\"{o}dinger equation with saturation effects was established on unbounded domains. That analysis demonstrates that suitably chosen nonlinear damping mechanisms can overcome dispersive
effects even in the absence of compactness properties typically available in bounded domains. Such results are particularly relevant for physical systems
modeled in open space, where boundary confinement cannot be assumed. The main goal of this paper is to extend the general approach taken in the
theory presented in \cite{MR4725781} in order to extend previous results in the literature on complex Ginzburg--Landau equation in which the
saturation term is understood as an absorption term (see, e.g., Antontsev, Dias and Figueira~\cite{MR3208711} and~\cite{zbMATH05502727,MR2194979,MR2268809,MR2365580,MR4098331}).

\bigskip
\noindent
The damped nonlinear Schr\"{o}dinger equation may be viewed as a limiting or simplified model within the broader Ginzburg--Landau framework. Introducing a complex coefficient in front of the time derivative allows one to interpolate continuously between purely dispersive Schr\"{o}dinger dynamics and purely dissipative parabolic dynamics. This observation motivates the extension of the stabilization theory developed in \cite{MR4725781} to the complex Ginzburg--Landau equation posed on general domains $\Omega\subseteq\R^N$ (possible unbounded), with boundary $\partial\Omega,$
\begin{empheq}[left=\empheqlbrace]{align}
\label{gl}
\e^{-\vi\theta}\frac{\partial u}{\partial t}-\Delta u+a|u|^{-(1-m)}u+b|u|^{p-1}u+\gamma u=f,	&	\text{ in } (0,\infty)\times\Omega,				\\
\label{glb}
u_{|\partial\Omega}=0,													&	\text{ on } (0,\infty)\times\partial\Omega,	\dfrac{}{}	\\
\label{u0}
u(0)= u_0,																&	\text{ in } \Omega,
\end{empheq}
where $\theta\in\left(-\frac\pi2,\frac\pi2\right),$ $0\le m\le1$ and $a,b,\gamma\in\C.$ Here we write, for generality $p\in(1,\infty)$ but the physically more often case considered in the literature corresponds to $p=3.$

\bigskip
\noindent
For $\theta\in\left[-\frac\pi2,\frac\pi2\right]$ and $m\ge0,$ we introduce the following set of complex numbers:
\begin{gather*}
C_\theta(m)=\Big\{z\in\C; \; \Re(z\e^{\vi\theta})>0 \text{ and } 2\sqrt m\,\Re(z\e^{\vi\theta})\ge|1-m|\,|\Im(z\e^{\vi\theta})|\Big\}.
\end{gather*}
In the particular cases in which $m\in\{0,1\},$ the set $C_\theta(m)$ becomes,
\begin{align*}
& C_\theta(0)=\Big\{z\in\C; \; \Re(z\e^{\vi\theta})>0 \text{ and } \Im(z\e^{\vi\theta})=0\Big\},	\\
& C_\theta(1)=\Big\{z\in\C; \; \Re(z\e^{\vi\theta})>0\Big\},
\end{align*}
and actually,
\begin{gather*}
C_\theta(0)=\Big\{z\in\C; \; \exists\mu>0 \text{ such that } z=\mu\,\e^{-\vi\theta}\Big\}.
\end{gather*}

\noindent
We note that if $\theta=\dfrac\pi2,$ $0\le m\le1,$ $a\in C_\theta(m),$ $b=0,$ $\gamma=-V(x)\in L^1_\loc(\Omega;\R)$ and $f\in L^1_\loc\big([0,\infty);L^2(\Omega)\big),$ then equation~\eqref{gl} becomes
\begin{gather}
\label{nls}
\vi\frac{\partial u}{\partial t}+\Delta u+V(x)u-a|u|^{-(1-m)}u=-f.
\end{gather}
It follows that the nonlinear Schr\"{o}dinger equation~\eqref{nls} is a limit case of the Ginzburg--Landau equation~\eqref{gl}, in term of $\theta.$ But the Ginzburg--Landau equation~\eqref{gl} may also be considered as an intermediate equation between the nonlinear Schr\"{o}dinger equation and the nonlinear heat equation
\begin{gather*}
\frac{\partial u}{\partial t}-\Delta u+a|u|^{-(1-m)}u=f,
\end{gather*}
by taking $\theta=0,$ $a\in\R$ and $b=\gamma=0$ in \eqref{gl}. In this last case, $a\in C_0(m)$ only means that $a$ is a positive real number.

\bigskip
\noindent
The strategy of the proofs in this paper relies on the use of suitable energy methods, sharpening the ones presented in the monograph \cite{MR2002i35001}. Those methods capture the effective dissipation induced by the nonlinear terms, combined with refined energy estimates adapted to unbounded domains. Singular nonlinearities with $0\le m<1$ require weak formulations and truncation arguments to control the dynamics near vanishing amplitudes.

\bigskip
\noindent
In stabilization problems, the presence of a damped saturated term plays a crucial role. Linear mechanisms and Lipschitz nonlinear terms alone produce only exponential decay but not strong stabilization properties.

\bigskip
\noindent
From a physical standpoint, the stabilization mechanism analyzed in this work can be interpreted as an effective dissipation process capable of counterbalancing dispersion and diffusion in open systems. In unbounded spatial domains, energy injected locally can escape to infinity through wave propagation or diffusive transport, preventing the formation of confined modes and undermining stabilization mechanisms based solely on linear damping.

\bigskip
\noindent
The nonlinear terms appearing in the complex Ginzburg--Landau equation introduce amplitude-dependent dissipation that becomes particularly effective in regimes where linear mechanisms fail. The singular term $|u|^{-(1-m)}u$ acts as a strong damping mechanism near low-amplitude states, suppressing residual oscillations and preventing the persistence of small-amplitude coherent structures. From the physical point of view, this term can be interpreted as a saturation or threshold effect that inhibits the survival of weak excitations.

\bigskip
\noindent
The complex prefactor $e^{-\vi\theta}$ plays a fundamental role in shaping the dynamics. For $\theta\neq0$, the system no longer conserves energy in the Hamiltonian sense, and the interaction between dispersive and dissipative components leads to a gradual relaxation toward equilibrium. This behavior
is characteristic of systems far from equilibrium, where dissipation and dispersion coexist and compete.

\bigskip
\noindent
From the perspective of nonlinear dynamics, the stabilization results obtained in this work indicate that the complex Ginzburg--Landau equation on
unbounded domains behaves as a genuinely dissipative system, despite the absence of geometric confinement and the presence of continuous spectrum.
The nonlinear damping mechanisms effectively restore asymptotic stability by suppressing long-wavelength excitations and dispersive tails, leading to
strong convergence toward stationary states.

\bigskip
\noindent
These results provide a rigorous mathematical justification for the physical intuition that nonlinear dissipation and saturation can stabilize extended
systems even in open geometries, a phenomenon observed in a variety of physical contexts ranging from superconductivity and nonlinear optics to
pattern-forming systems far from equilibrium.

\bigskip
\noindent
One of our main motivations is the rigorous proof of the strong stabilization (in a finite time) to $u=0.$ This qualitative property is also called in the literature as the Finite Time Extinction property and it is also related with the so-called Finite Time Null controllability in Control Theory. For instance, the case of a pure saturation $m=0$ nonlinearity, as the one considered in \eqref{gl} can be understood also in the framework of Control Theory as a special case of a feed-back control $y(t,x)$ of ``bang-bang type" for the complex Ginzburg--Landau equation when we write \eqref{gl} in the form
\begin{gather*}
e^{-\vi\theta}\frac{\partial u}{\partial t}-\Delta u+b|u|^{p-1}u+\gamma u=f+y(t,x),	\text{ in } (0,\infty)\times\Omega,
\end{gather*}
with
\begin{gather*}
y(t,x)=-\vi\mu\frac{u(t,x)}{|u(t,x)|}.
\end{gather*}
where $\mu>0.$ This type of control has been considered in the applications to many dissipative evolution equations (see \cite{MR4725781} and its references). Nevertheless, the controllability for the complex Ginzburg--Landau equation is more delicate (for some related results, see, e.g., Rosier and Zhang~\cite{MR2504047} and Fenza, Labbadi and Ouzahra~\cite{FenzaLabbadiOuzahra}).

\bigskip
\noindent
In this paper, finite time extinction property (finite stabilization) of the solutions are obtained under the assumption that $a\in C_\theta(m),$ while for the equation~\eqref{nls}, they are proved in the series of papers \cite{MR4098330,MR4053613,MR4340780,MR4503241,MR4725781} under the assumption that $-a\in C(m),$ where
\begin{gather*}
C(m)=\Big\{z\in\C; \; \Im(z)>0 \text{ and } 2\sqrt m\Im(z)\ge(1-m)|\Re(z)|\Big\}.
\end{gather*}
Finally, notice that $a\in C_\frac\pi2(m)$ if, and only if, $-a\in C(m).$

\bigskip
\noindent
The organization of this paper is the following. Section~\ref{finite} presents the statements of the main results concerning the strong stabilization. In Section ~\ref{proofext} we present the proofs of the results concerning  the strong stabilization of the solutions.

\bigskip
\noindent
We collect here some notations that will be used along with this paper. For $t\in\R,$ $t_+=\max\{t,0\}$ is the positive part of $t.$ Unless if specified, all functions are complex-valued and all the vector spaces are considered over the field $\R.$ For a Banach space $X,$ we denote by $X^\star\eqdef\Lr(X;\R)$ its topological dual and by $\langle\: . \; , \: . \:\rangle_{X^\star,X}\in\R$ the $X^\star-X$ duality product. The product $\vi T\in X^\star,$ for $T\in X^\star,$ is defined in~\cite{MR4521439}. For $1\le p\le\infty,$ $p^\prime$ is the conjugate of $p$ defined by $\frac{1}{p}+\frac{1}{p^\prime}=1.$ For a Banach space $X$ and $p\in(0,\infty],$ $u\in L^p_\loc\big([0,\infty);X\big)$ means that $u\in L^p_\loc\big((0,\infty);X\big)$ and for any $T>0,$ $u_{|(0,T)}\in L^p\big((0,T);X\big).$ In the same way, we will use the notation $u\in W^{1,p}_\loc\big([0,\infty);X\big).$ If $p\in(0,\infty]$ and $r=0$ then $L^\frac{p}r(\Omega)=L^\infty(\Omega)$ and $W^{1,\frac{p}r}(\Omega)=W^{1,\infty}(\Omega).$ Finally, we denote by $C$ auxiliary positive constants, and sometimes, for positive parameters $a_1,\ldots,a_n,$ write as $C(a_1,\ldots,a_n)$ to indicate that the constant $C$ depends only and continuously on $a_1,\ldots,a_n$ (we will use this convention for constants which are not denoted merely by ``$C$'').

\section{Finite time extinction property}
\label{finite}

Let us recall that if $\theta\in\left[-\frac\pi2,\frac\pi2\right]$ and $m\ge0$ then $C_\theta(m)$ is defined by
\begin{gather*}
C_\theta(m)=\Big\{z\in\C; \; \Re(z\e^{\vi\theta})>0 \text{ and } 2\sqrt m\,\Re(z\e^{\vi\theta})\ge|1-m|\,|\Im(z\e^{\vi\theta})|\Big\}.
\end{gather*}
In order to have existence of solution, we make the following assumptions.

\begin{assum}
\label{ass}
We assume the following.
\begin{gather*}
\Omega \text{ is any nonempty open subset of } \R^N,	\\
-\dfrac\pi2<\theta<\dfrac\pi2,						\\
m\in[0,1] \; \text{ and } \; p\in(1,\infty),				\\
a\in C_\theta(m) \; \text{ and } \; b\in C_\theta(p)\cup\{0\},	\\
\gamma\in\C \text{ with } \Re(\gamma\e^{\vi\theta})\ge0.
\end{gather*}
\end{assum}

\begin{defi}
\label{defsatsec}
Let $\Or\subseteq\R^N$ be an open subset and let $u\in L^1_\loc(\Or).$ A function $U$ is said to be a \textit{saturated section} associated to $u$ if $U\in L^\infty(\Or),$ $\|U\|_{L^\infty(\Or)}\le1$ and $U=\dfrac{u}{|u|},$ almost everywhere where $u\neq0.$
\end{defi}

\noindent
Now, let us recall the notion of solution.

\begin{defi}
\label{defsol}
Let Assumption~\ref{ass} be fulfilled, let $f\in L^1_\loc\big([0,\infty);L^2(\Omega)\big)$ and let $u_0\in L^2(\Omega).$ We shall say that $u$ is an $H^2$-\textit{solution} \textit{to} \eqref{gl}--\eqref{u0}, if $u$ satisfies the following properties.
\begin{enumerate}
\item
\label{defsol1}
We have that
\begin{gather}
\label{defsol11}
u\in L^{m+1}_\loc\big([0,\infty);H^1_0(\Omega)\cap X_{m,p}\big)
\cap W^{1,\frac{m+1}m}_\loc\big([0,\infty);L^2(\Omega)+X_{m,p}^\star\big),
\end{gather}
where $X_{m,p}=L^{m+1}(\Omega)\cap L^{p+1}(\Omega).$
\item
\label{defsol2}
For almost every $t>0,$ $\Delta u(t)\in L^2(\Omega).$
\item
\label{defsol3}
\begin{enumerate}
\item
\label{defsol3a}
If $m>0$ then $u$ satisfies~\eqref{gl} in $\Dr^\p\big((0,\infty)\times\Omega\big).$
\item
\label{defsol3b}
If $m=0$ then there exists a saturated section $U$ associated to $u$ such that the pair $(u,U)$ satisfies
\begin{gather}
\label{gl0}
\e^{-\vi\theta}\frac{\partial u}{\partial t}-\Delta u+a\,U+b|u|^{p-1}u+\gamma u=f,
\end{gather}
in $\Dr^\p\big((0,\infty)\times\Omega\big).$
\end{enumerate}
\item
\label{defsol4}
We have that $u(0)=u_0,$ in $L^2(\Omega).$
\end{enumerate}
We shall say that $u$ is an $L^2$-\textit{solution} or a \textit{weak solution to} \eqref{gl}--\eqref{u0} if there exists a pair,
\begin{gather}
\label{fn}
(u_n,f_n)_{n\in\N}\subset C\big([0,\infty);L^2(\Omega)\big)\times L^1_\loc\big([0,\infty);L^2(\Omega)\big),
\end{gather}
such that for any $n\in\N,$ $u_n$ is an $H^2$-solution to \eqref{gl}--\eqref{u0} where the right hand side of \eqref{gl} is $f_n,$ and if
\begin{gather}
\label{cv}
u_n\xrightarrow[n\to\infty]{C([0,T];L^2(\Omega))}u \; \text{ and } \; f_n\xrightarrow[n\to\infty]{L^1((0,T);L^2(\Omega))}f,
\end{gather}
for any $T>0.$ Sometimes, we shall write $(u,f),$ $(u,U)$ or $(u,U,f)$ to designate a solution with the obvious meanings.
\end{defi}

\noindent
We recall that under Assumptions~\ref{ass}, if $f\in L^1_\loc\big([0,\infty);L^2(\Omega)\big)$ then for any $u_0\in L^2(\Omega),$ there exists a unique weak solution to \eqref{gl}--\eqref{u0} (\cite[Theorem~2.8]{Part1}).

\begin{thm}[\textbf{Infinite time extinction property}]
\label{thm0w}
Let Assumption~$\ref{ass}$ be fulfilled, let $u_0\in L^2(\Omega),$ let $f\in L^1\big((0,\infty);L^2(\Omega)\big)$ and let $u$ be the unique weak solution to \eqref{gl}--\eqref{u0}. Then,
\begin{gather*}
\vlim_{t\nearrow\infty}\|u(t)\|_{L^2(\Omega)}=0.
\end{gather*}
\end{thm}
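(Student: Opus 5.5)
The plan is an $L^2$ energy estimate combined with the $L^2$-contraction (continuous dependence) property of weak solutions, proved first for $H^2$-solutions and then passed to weak solutions by density.

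\textbf{Step 1: the energy inequality for $H^2$-solutions.} For an $H^2$-solution $(u,U,f)$, take the $L^2$ duality product of \eqref{gl} (or \eqref{gl0} when $m=0$) with $\e^{\vi\theta}u$ and keep real parts. This should give, for almost every $t>0$,
\begin{gather*}
\frac12\frac{\d}{\d t}\|u(t)\|_{L^2(\Omega)}^2+\cos\theta\,\|\nabla u(t)\|_{L^2(\Omega)}^2+\Re(a\e^{\vi\theta})\|u(t)\|_{L^{m+1}(\Omega)}^{m+1}
+\Re(b\e^{\vi\theta})\|u(t)\|_{L^{p+1}(\Omega)}^{p+1}+\Re(\gamma\e^{\vi\theta})\|u(t)\|_{L^2(\Omega)}^2
=\Re\int_\Omega\e^{\vi\theta}f\,\overline{u}\,\d x.
\end{gather*}
When $m=0$, the nonlinear term is $\int|u|=\|u\|_{L^1}$, because $U\overline u=|u|$. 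The regularity in \eqref{defsol11} justifies the time-derivative formula. Under Assumption~\ref{ass}, every term on the left except the derivative is nonnegative.

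\textbf{Step 2: decay for forcing with compact support in time.} Dropping the nonnegative terms gives $\frac{\d}{\d t}\|u\|_{L^2}\le\|f\|_{L^2}$, hence
\begin{gather*}
\|u(t)\|_{L^2}\le\|u(s)\|_{L^2}+\int_s^t\|f(\sigma)\|_{L^2}\,\d\sigma,\qquad t\ge s.
\end{gather*}
Now suppose $f=0$ on $(T_0,\infty)$. Since $\|u(t)\|_{L^2}$ is nonincreasing there, it suffices to show $\|u(t)\|_{L^2}\to0$.

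\textbf{Step 3: the decay mechanism (main obstacle).} On unbounded $\Omega$ there is no Poincaré inequality, and $\Re(\gamma\e^{\vi\theta})$ may vanish, so decay must come from the sublinear term. I would interpolate
\begin{gather*}
\|u\|_{L^2}\le\|u\|_{L^{m+1}}^{1-\alpha}\|u\|_{L^{2^\ast}}^{\alpha}
\end{gather*}
by Hölder, with $2^\ast$ a Sobolev exponent (any finite exponent $q>2$ if $N\le2$), and then use Gagliardo--Nirenberg: $\|u\|_{L^{q}}\le C\|u\|_{L^2}^{1-\beta}\|\nabla u\|_{L^2}^{\beta}$. This yields a differential inequality $y'+c\,y^{\kappa}\le0$ for $y=\|u\|_{L^2}^2$, valid whenever the dissipation terms control each other; the Young-inequality bookkeeping that produces it is the delicate point. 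Even more simply, integrating Step 1 gives
\begin{gather*}
\int_{T_0}^\infty\bigl(\|\nabla u\|_{L^2}^2+\|u\|_{L^{m+1}}^{m+1}\bigr)\,\d t<\infty.
\end{gather*}
Combined with the interpolation above, this gives $\int_{T_0}^\infty\|u\|_{L^2}^{r}\,\d t<\infty$ for some $r>0$, since $\|u\|_{L^2}$ is bounded. As $\|u\|_{L^2}$ is monotone, it must then tend to $0$. I expect the exponent balancing needed for this integrability, uniformly in $N$ and in $m\in[0,1]$, to be where care is needed.

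\textbf{Step 4: general data.} For weak solutions, approximate as in \eqref{fn}--\eqref{cv}; the Step 1 identity becomes an inequality that passes to the limit. For general $f\in L^1((0,\infty);L^2)$, fix $\eps>0$ and choose $T_0$ with $\int_{T_0}^\infty\|f\|_{L^2}<\eps$. Let $v$ be the solution with forcing $f\1_{(0,T_0)}$, which tends to $0$ by Steps 2--3. The $L^2$-contraction estimate from Part~I (obtained by the same multiplier applied to a difference, using monotonicity of the nonlinearities under $a\in C_\theta(m)$, $b\in C_\theta(p)$) gives
\begin{gather*}
\|u(t)-v(t)\|_{L^2}\le\int_{T_0}^t\|f\|_{L^2}<\eps.
\end{gather*}
Hence $\limsup_{t\to\infty}\|u(t)\|_{L^2}\le\eps$, and letting $\eps\to0$ gives the result.
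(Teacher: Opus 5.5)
Your argument is correct, but it reaches the conclusion by a different route from the paper.

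\textbf{The paper's route.} It uses \eqref{propdep2} and density to reduce at once to $u_0\in\Dr(\Omega)$ and $f\in\Dr\big((0,\infty);L^2(\Omega)\big)$. Then $u$ is an $H^2$-solution and $f$ vanishes after some $T_0$. The paper then quotes its quantitative results: exponential decay (Proposition~\ref{propm1}) when $m=1$, and finite time extinction (Theorem~\ref{thmext}) when $m<1$. The latter rests on the differential inequality \eqref{demthmext1}, obtained from \eqref{L2} and \eqref{demthmext0}.

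\textbf{Your route.} You truncate $f$ in time and control the error by the same contraction estimate \eqref{propdep2}. You then get decay for the truncated problem by a soft argument. The integrated dissipation is finite, Gagliardo--Nirenberg interpolation converts this into $\int_{T_0}^\infty\|u(t)\|_{L^2(\Omega)}^r\,\d t<\infty$, and monotonicity of $\|u(t)\|_{L^2(\Omega)}$ on $(T_0,\infty)$ forces the limit to be zero.

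\textbf{Comparison.} Your argument treats $m=1$ and $m<1$ uniformly and needs no ODE comparison, but it gives no rate. The paper's route yields an explicit extinction time or decay rate, at the price of first proving Theorem~\ref{thmext}. Your passage to weak solutions is sound: weak lower semicontinuity of $\int\|\nabla u\|_{L^2(\Omega)}^2$ and Fatou for $\int\|u\|_{L^{m+1}(\Omega)}^{m+1}$ along the approximating $H^2$-solutions suffice. The paper's alternative avoids this step: approximate $u_0$ and $f\1_{(0,T_0)}$ by test functions via Lemma~\ref{lemden1} and use \eqref{propdep2} uniformly in time.

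\textbf{On the exponent balancing.} The step you flag as delicate is harmless. Set $A=\|u\|_{L^{m+1}(\Omega)}^{m+1}$ and $B=\|\nabla u\|_{L^2(\Omega)}^2$. Any bound $\|u\|_{L^2(\Omega)}^\kappa\le CA^sB^t$ with $s,t\ge0$ and $s+t>0$ gives $\|u\|_{L^2(\Omega)}^{\kappa/(s+t)}\le C^{1/(s+t)}(A+B)$, because $A^sB^t\le(A+B)^{s+t}$. This is exactly how \eqref{demthmext0} is derived, with $\kappa/(s+t)=2\delta$. The boundedness of $\|u\|_{L^2(\Omega)}$ is not needed.
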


\begin{prop}[\textbf{Infinite time extinction property}]
\label{propm1}
Let Assumption~$\ref{ass}$ be fulfilled with $m=1,$ let $f\in L^1\big((0,\infty);L^2(\Omega)\big),$ let $u_0\in L^2(\Omega)$ and let $u$ be the unique weak solution to \eqref{gl}--\eqref{u0}. If $f=0$ almost everywhere on $(T_0,\infty),$ for some $T_0\ge0,$ then
\begin{gather*}
\|u(t)\|_{L^2(\Omega)}\le\|u(T_0)\|_{L^2(\Omega)}e^{-\Re(a\e^{\vi\theta})(t-T_0)}.
\end{gather*}
for any $t\ge T_0.$
\end{prop}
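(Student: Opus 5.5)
With $m=1$ the term $a|u|^{-(1-m)}u$ is simply $au$, so \eqref{gl} is a linear damping term plus the monotone terms $b|u|^{p-1}u$ and $\gamma u$. The plan is to prove the estimate for $H^2$-solutions with a standard $L^2$ energy identity and then pass to weak solutions by density.

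\emph{Step 1 (energy identity for $H^2$-solutions).} Let $u$ be an $H^2$-solution with right-hand side $f$ vanishing on $(T_0,\infty)$. For $m=1$, $X_{1,p}=L^2(\Omega)\cap L^{p+1}(\Omega)$ and $u\in L^2_\loc\big([0,\infty);H^1_0\cap X_{1,p}\big)\cap W^{1,2}_\loc\big([0,\infty);L^2+X_{1,p}^\star\big)$. A Lions--Magenes type lemma then shows that $t\mapsto\|u(t)\|_{L^2}^2$ is absolutely continuous, with
\begin{gather*}
\frac12\frac{\d}{\d t}\|u(t)\|_{L^2(\Omega)}^2=\Big\langle \frac{\partial u}{\partial t},u\Big\rangle .
\end{gather*}
I would multiply \eqref{gl} by $\e^{\vi\theta}$, take the duality product with $u$ and then the real part. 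For almost every $t>T_0$ this gives
\begin{gather*}
\frac12\frac{\d}{\d t}\|u\|_{L^2}^2+\Re(\e^{\vi\theta})\|\nabla u\|_{L^2}^2+\Re(a\e^{\vi\theta})\|u\|_{L^2}^2+\Re(b\e^{\vi\theta})\|u\|_{L^{p+1}}^{p+1}+\Re(\gamma\e^{\vi\theta})\|u\|_{L^2}^2=0.
\end{gather*}
The terms that need care are the Laplacian term $\Re\big(\e^{\vi\theta}\int\nabla u\cdot\ovl{\nabla u}\big)=\cos\theta\|\nabla u\|^2\ge0$ and the remaining nonnegative terms, which use $b\in C_\theta(p)\cup\{0\}$ and $\Re(\gamma\e^{\vi\theta})\ge0$ from Assumption~\ref{ass}. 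Discarding these nonnegative terms gives
\begin{gather*}
\frac{\d}{\d t}\|u\|_{L^2}^2\le-2\Re(a\e^{\vi\theta})\|u\|_{L^2}^2 .
\end{gather*}
Gronwall's inequality then yields
\begin{gather*}
\|u(t)\|_{L^2}\le\|u(T_0)\|_{L^2}\e^{-\Re(a\e^{\vi\theta})(t-T_0)}.
\end{gather*}

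\emph{Step 2 (passage to weak solutions).} The weak solution is only a limit of $H^2$-solutions $u_n$ with data $f_n\to f$ in $L^1((0,T);L^2)$, and $f_n$ need not vanish after $T_0$. So for $H^2$-solutions I would keep the forcing term in the identity. The estimate becomes
\begin{gather*}
\frac{\d}{\d t}\|u_n\|\le-\Re(a\e^{\vi\theta})\|u_n\|+\|f_n\|_{L^2},
\end{gather*}
obtained after dividing by $\|u_n\|$, or more safely by working with $\sqrt{\|u_n\|^2+\eps}$. Integrating this gives a Duhamel-type bound:
\begin{gather*}
\|u_n(t)\|\le\|u_n(T_0)\|\e^{-\Re(a\e^{\vi\theta})(t-T_0)}+\int_{T_0}^t\e^{-\Re(a\e^{\vi\theta})(t-s)}\|f_n(s)\|\,\d s .
\end{gather*}
Letting $n\to\infty$ with \eqref{cv}, the integral tends to $\int_{T_0}^t\e^{-\Re(a\e^{\vi\theta})(t-s)}\|f(s)\|\,\d s=0$, since $f=0$ on $(T_0,\infty)$. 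This yields the claim for every $t\ge T_0$.

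\emph{Main obstacle.} The delicate point is justifying the energy identity rigorously for $H^2$-solutions. Specifically, the duality $\langle\partial_tu,u\rangle$ must be taken in $L^2+X^\star$ against $X$. One also has to check that $\langle-\Delta u,u\rangle=\|\nabla u\|^2$ holds on an arbitrary open set, which is fine since $u(t)\in H^1_0$ and $\Delta u(t)\in L^2$. I expect this to be available from Part~I (\cite{Part1}), where the same identity underlies uniqueness, and I would cite it rather than reprove it.
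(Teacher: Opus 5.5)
Your proof is correct. Step~1 is the paper's argument: the paper also invokes \eqref{L2} from Part~I for $H^2$-solutions and drops the nonnegative terms.

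Your passage to weak solutions takes a different route. The paper does not use the approximating sequence from Definition~\ref{defsol}. It builds smooth data with Lemma~\ref{lemden1}, whose support property \eqref{lemden14} forces $f_n=0$ on $(T_0+\frac1n,\infty)$. It then applies the unforced decay estimate from time $T_0+\frac1n$ onward and lets $n\to\infty$ using the $L^2$ continuous dependence \eqref{propdep2}.

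You instead keep the forcing and prove a Duhamel-type bound for each approximant. This bound is stable under $u_n\to u$ in $C([0,T];L^2(\Omega))$ and $f_n\to f$ in $L^1((0,T);L^2(\Omega))$. So you can work directly with the arbitrary sequence given by \eqref{cv}, and you need neither \eqref{lemden14} nor \eqref{propdep2}.

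The price is twofold. First, you need \eqref{L2} for $H^2$-solutions with a general $f_n\in L^1_\loc\big([0,\infty);L^2(\Omega)\big)$ rather than smooth data; the paper does assert this at the start of Section~\ref{proofext}. Second, you need the $\sqrt{\|u_n\|^2+\eps}$ regularization. It works, but it produces an extra source term $\Re(a\e^{\vi\theta})\sqrt\eps$ in the differential inequality, which you should note vanishes as $\eps\to0$.

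The gain is a stronger statement. For every weak solution, with no support condition on $f$, and for $t\ge s\ge0$,
$\|u(t)\|_{L^2(\Omega)}\le\|u(s)\|_{L^2(\Omega)}\e^{-\Re(a\e^{\vi\theta})(t-s)}+\int_s^t\e^{-\Re(a\e^{\vi\theta})(t-\sigma)}\|f(\sigma)\|_{L^2(\Omega)}\,\d\sigma$.
The proposition follows from this at once.
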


\noindent
In order to have finite time extinction of the solutions, we make the following assumptions.

\begin{assum}
\label{assext}
Let Assumption~\ref{ass} be fulfilled with $m<1,$ let $f\in L^1_\loc\big([0,\infty);L^2(\Omega)\big),$ let $u_0\in L^2(\Omega)$ and let $u$ be the unique weak solution $u$ to \eqref{gl}--\eqref{u0}. We assume that there exists a finite time $T_0\ge0$ such that
\begin{gather}
\label{assext1}
\text{for almost every } (t,x)\in(T_0,\infty)\times\Omega, \; f(t,x)=0.
\end{gather}
If $m=0$, then we may make a weaker hypothesis. Instead of~\eqref{assext1}, we may assume that
\begin{gather}
\label{assext2}
f\in L^\infty\big((T_0,\infty);L^\infty(\Omega)\big) \text{ and } \|f\|_{L^\infty((T_0,\infty));L^\infty(\Omega))}<\Re(a\e^{\vi\theta}).
\end{gather}
Finally, we set $\delta=\frac{(N+2)-m(N-2)}{N(1-m)+4}\in\left(\frac12,1\right)$ and $\lambda=2(1-\delta)=\frac{4(1-m)}{N(1-m)+4}.$
\end{assum}

\begin{thm}[\textbf{Finite time extinction property}]
\label{thmext}
Let Assumption~$\ref{assext}$ be fulfilled.
\begin{enumerate}
\item
\label{thmext1}
For any $t\ge T_0,$
\begin{gather}
\label{thmext11}
\|u(t)\|_{L^2(\Omega)}\le\left(\|u(T_0)\|_{L^2(\Omega)}^\frac{4(1-m)}{N(1-m)+4}
-\lambda MC_\GN^{-\frac4{N(1-m)+4}}(t-T_0)\right)_+^\frac{N(1-m)+4}{4(1-m))},
\end{gather}
where $C_\GN$ is given by~\eqref{GN} below and
\begin{gather}
\label{thmext12}
M=\min\left\{\cos\theta,\Re(a\e^{\vi\theta})-\|f\|_{L^\infty((T_0,\infty);L^\infty(\Omega))}\right\}.
\end{gather}
In particular,
\begin{gather}
\label{thmext13}
\forall t\ge T_\star, \; \|u(t)\|_{L^2(\Omega)}=0,
\end{gather}
where
\begin{gather}
\label{thmext14}
T_\star\le\frac{C_\GN^\frac4{N(1-m)+4}}{\lambda M}\|u(T_0)\|_{L^2(\Omega)}^\frac{4(1-m)}{N(1-m)+4}+T_0.
\end{gather}
\item
\label{thmext2}
There exists $\eps_\star=\eps_\star(m,N)$ satisfying the following property. If
\begin{gather}
\label{thmext21}
\begin{cases}
\|u_0\|_{L^2(\Omega)}^{2(1-\delta)}\le\eps_\star T_0,						\medskip \\
\|f(t)\|_{L^2(\Omega)}^2\le\eps_\star\big(T_0-t\big)_+^\frac{2\delta-1}{1-\delta},
\end{cases}
\end{gather}
for almost every $t>0,$ then~\eqref{thmext13} holds true with $T_\star=T_0.$
\end{enumerate}
\end{thm}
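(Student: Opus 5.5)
We argue first for $H^2$-solutions (a regularized pair $(u_n,f_n)$ as in Definition~\ref{defsol}) and pass to the limit at the end using \eqref{cv}, since the final estimates only involve $\|u(t)\|_{L^2(\Omega)}$ and norms of $f$. The starting point is the energy identity obtained by multiplying \eqref{gl} (or \eqref{gl0} when $m=0$) by $\e^{\vi\theta}\ovl u$, integrating over $\Omega$ and taking real parts:
\begin{multline*}
\frac12\frac{\d}{\d t}\|u(t)\|_{L^2}^2+\cos\theta\,\|\nabla u\|_{L^2}^2+\Re(a\e^{\vi\theta})\|u\|_{L^{m+1}}^{m+1}+\Re(b\e^{\vi\theta})\|u\|_{L^{p+1}}^{p+1}\\
+\Re(\gamma\e^{\vi\theta})\|u\|_{L^2}^2=\Re\int_\Omega\e^{\vi\theta}f\ovl u\,\d x .
\end{multline*}
For $m=0$ the saturated section gives $\Re\int a\e^{\vi\theta}U\ovl u=\Re(a\e^{\vi\theta})\|u\|_{L^1}$, since $U\ovl u=|u|$. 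By Assumption~\ref{ass}, the $b$ and $\gamma$ terms are nonnegative and can be dropped. For $t>T_0$, either $f=0$, or (when $m=0$) $|\Re\int\e^{\vi\theta}f\ovl u|\le\|f\|_{L^\infty}\|u\|_{L^1}$. In both cases we obtain
\[
\frac12\frac{\d}{\d t}\|u\|_{L^2}^2+M\Big(\|\nabla u\|_{L^2}^2+\|u\|_{L^{m+1}}^{m+1}\Big)\le0 .
\]
Here $M$ is as in \eqref{thmext12}, with the $f$-term absent in the case \eqref{assext1}.

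Next, apply the Gagliardo--Nirenberg inequality \eqref{GN} interpolating $L^2$ between $\dot H^1$ and $L^{m+1}$:
\[
\|u\|_{L^2}\le C_\GN\|\nabla u\|_{L^2}^{\alpha}\|u\|_{L^{m+1}}^{1-\alpha},
\]
with $\alpha$ fixed by scaling. Young's inequality then converts this into
\[
\|u\|_{L^2}^{2/\delta'}\le C\big(\|\nabla u\|_{L^2}^2+\|u\|_{L^{m+1}}^{m+1}\big)
\]
for the exponent that makes the resulting ODE read
\[
y'\le-2MC_\GN^{-4/(N(1-m)+4)}\,y^{\delta},\qquad y=\|u\|_{L^2}^2\ \text{ or }\ \|u\|_{L^2}.
\]
Checking the homogeneity gives the exponent
\[
\frac{(N+2)-m(N-2)}{N(1-m)+4}=\delta<1 .
\]
Integrating $y'\le -Ky^\delta$ with $\lambda=2(1-\delta)$ yields $y(t)^{1-\delta}\le\big(y(T_0)^{1-\delta}-(1-\delta)K(t-T_0)\big)_+$, which is \eqref{thmext11} after rewriting in terms of $\|u\|_{L^2}$. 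The bound \eqref{thmext14} on $T_\star$ is the zero of the right-hand side.

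For part~\ref{thmext2}, we keep $f$ on $(0,T_0)$. We estimate $|\Re\int\e^{\vi\theta}f\ovl u|\le\|f\|_{L^2}\|u\|_{L^2}$ and obtain
\[
y'+Ky^\delta\le2\|f\|_{L^2}\,y^{1/2},\qquad y=\|u\|_{L^2}^2 .
\]
We then compare with the barrier $z(t)=\eps(T_0-t)_+^{1/(1-\delta)}$, which solves $z'+Cz^\delta\ge\ldots$ with equality up to constants. Since $z^{1/2}\sim(T_0-t)^{1/(2(1-\delta))}$, the hypothesis \eqref{thmext21} on $\|f\|_{L^2}^2$ makes $\|f\|y^{1/2}\lesssim\sqrt{\eps_\star}(T_0-t)^{\delta/(1-\delta)}\sim z^\delta$, which is absorbed for small $\eps_\star$. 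The initial condition $y(0)\le z(0)$ is the first line of \eqref{thmext21}. A comparison or Young absorption argument (as in \cite{MR2002i35001}) then gives $y\le z$, hence $u(T_0)=0$. Part~\ref{thmext1} with $u(T_0)=0$ finishes the proof.

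The main obstacle is this last comparison argument: the inequality is non-Lipschitz at $y=0$, so one must compare carefully, e.g.\ via $y^{1-\delta}$ and the quantity $w=y^{1-\delta}-z^{1-\delta}$. Justifying the energy identity for weak solutions through the approximating sequence requires checking that the $L^\infty$ smallness of $f$ survives the regularization $f_n$. I expect both points to be technical but standard.
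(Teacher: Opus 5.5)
Your Part~1 is the paper's argument: the energy identity \eqref{L2}, discarding the $b$ and $\gamma$ terms, \eqref{GN} in the form \eqref{demthmext0}, and integrating $y'+2\alpha y^\delta\le0$.

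\textbf{Gap 1: the passage from $H^2$-solutions to the weak solution.} You dispatch it with ``a regularized pair $(u_n,f_n)$ as in Definition~\ref{defsol}'' and \eqref{cv}. That pair is arbitrary. Nothing makes $f_n$ vanish on $(T_0,\infty)$, keep $\|f_n\|_{L^\infty((T_0,\infty);L^\infty(\Omega))}<\Re(a\e^{\vi\theta})$, or satisfy \eqref{thmext21}. Yet these are exactly what your differential inequalities use. You must construct the approximations yourself; \eqref{propdep2} then gives $u_n\to u$.

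In Part~1 this is mild. Time-mollification as in Lemma~\ref{lemden1} gives $\supp f_n\subset\supp f+\ovl B(0,\frac1n)$, so you run the estimate from $T_0+\frac1n$ and use $u_n(T_0+\frac1n)\to u(T_0)$. For $m=0$ the paper restarts at $T_0$ from $\vphi_n\to u(T_0)$ and uses \eqref{lemden16} to get $M_n\ge M$.

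In Part~2 it is the crux, and your proposal says nothing about it. A smooth approximation $f_n$ of $f$ does not in general satisfy $\|f_n(t)\|_{L^2}^2\le\eps_\star(T_0-t)_+^{\frac{2\delta-1}{1-\delta}}$. The paper therefore never imposes \eqref{thmext21} on $f_n$:
\begin{enumerate}
\item Young's inequality turns the $H^2$ inequality into $y_n'+\alpha y_n^\delta\le g_n$, with $g_n=C\|f_n\|_{L^2}^{\frac{2\delta}{2\delta-1}}$ independent of $y_n$.
\item Comparison gives $y_n\le z_n$, where $z_n'+\alpha z_n^\delta=g_n$.
\item The $L^1$-stability \eqref{lemden214} of this scalar ODE, together with $g_n\to g$ in $L^1$ (dominated convergence with the majorant from Lemma~\ref{lemden1}), yields $y\le z$ in the limit.
\item Only then is \eqref{thmext21} used, on the limit $f$, to compare $z$ with the explicit solution $\zeta(t)=(\alpha\delta(1-\delta))^{\frac1{1-\delta}}(T_0-t)_+^{\frac1{1-\delta}}$.
\end{enumerate}
Within your framework there is an alternative. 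Since $(T_0-t)_+^{\frac{2\delta-1}{1-\delta}}$ is nonincreasing, time-mollification gives the bound with $T_0$ replaced by $T_0+\frac1n$. Take $\|\vphi_n\|_{L^2}\le\|u_0\|_{L^2}$ and let $n\to\infty$. Some such argument is required.

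\textbf{Gap 2: the Part~2 comparison is circular as written.} Bounding $\|f\|_{L^2}y^{1/2}$ by $\sqrt{\eps_\star}(T_0-t)^{\frac\delta{1-\delta}}$ presupposes $y\le z$. The map $s\mapsto-Ks^\delta+2\|f(t)\|_{L^2}s^{1/2}$ is not monotone. And since $y$ is only a.e.\ differentiable, a touching-point argument needs an inequality on a whole interval where $y>z$.

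This can be repaired. The map is nonincreasing on $[z(t),\infty)$ once $\sqrt{\eps_\star}\le K\delta\,\eps^{\delta-\frac12}$. This works because the bound on $\|f(t)\|_{L^2}$ and $z(t)^{\delta-\frac12}$ carry the same power $\frac{2\delta-1}{2(1-\delta)}$ of $T_0-t$. Then, a.e.\ wherever $y>z$,
$y'\le-Ky^\delta+2\|f\|_{L^2}y^{1/2}\le-Kz^\delta+2\|f\|_{L^2}z^{1/2}\le z'$,
where the last step is your absorption. The first-crossing argument then closes.

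The paper's Young absorption avoids this issue by making the forcing independent of $y$, so that Lemma~\ref{lemden2} applies with the monotone term $\alpha s^\delta$. That same reduction is what makes the limit passage in Gap~1 work.
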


\begin{rmk}
\label{rmkM}
Here are some comments about Theorem~\ref{thmext}.
\begin{enumerate}
\item
\label{rmkM1}
If $f$ satisfies \eqref{assext1} then $\|f\|_{L^\infty((T_0,\infty);L^\infty(\Omega))}=0$ and \eqref{thmext12} reads as: $M=\min\left\{\cos\theta,\Re(a\e^{\vi\theta})\right\}.$
\item
\label{rmkM2}
We have that: $\frac{2\delta-1}{1-\delta}=\frac{N(1-m)+4m}{2(1-m)}.$
\end{enumerate}
\end{rmk}

\section{Proof of the finite time extinction property}
\label{proofext}

If $u$ is an $H^2$-strong solution then the map $t\longmapsto\|u(t)\|_{L^2(\Omega)}^2$ belongs to $W^{1,\infty}_\loc\big([0,\infty);\R\big)$ and
\begin{gather}
\label{L2}
\begin{split}
&\frac12\frac{\d}{\d t}\|u(t)\|_{L^2(\Omega)}^2+\cos\theta\|\nabla u(t)\|_{L^2(\Omega)}^2+\Re(a\e^{\vi\theta})\|u(t)\|_{L^{m+1}(\Omega)}^{m+1}	\\
+\;&\Re(b\e^{\vi\theta})\|u(t)\|_{L^{p+1}(\Omega)}^{p+1}+\Re(\gamma\e^{\vi\theta})\|u(t)\|_{L^2(\Omega)}^2=\Re\left(\e^{\vi\theta}\vint_{\Omega}f(t,x)\,\ovl{u(t,x)}\,\d x\right),
\end{split}
\end{gather}
for almost every $t>0$ (\cite[Theorem~2.14]{Part1}). The proof of Property~\ref{thmext1} of Theorem~\ref{thmext} (as well as Property~\ref{thmext2}) relies on the estimate of the time derivative of the mass \eqref{L2} to arrive at the estimate
\begin{gather}
\label{exp1}
y^\p(t)+\nu\,y(t)^\delta\le0,
\end{gather}
where $y(\,.\,)=\|u(\,.\,)\|_{L^2(\Omega)}^2,$ for some $\nu>0$ and $\delta\in(0,1).$ But \eqref{L2} does not hold for the weak solutions, as well as \eqref{exp1}. As a consequence, we first prove Property~\ref{thmext1} for the strong solutions and then proceed by density. The passage to the limit is possible with the help of the continuous dependance (see \eqref{propdep2} below) and the weak solutions are approached by strong solutions with the help of Lemma~\ref{lemden1} below. This proves the extinction of the solution in finite time. But the proof of Property~\ref{thmext2} of Theorem~\ref{thmext}, which permits us to choose at which time the solution vanishes, is more delicate. To this end, we use again the estimate of the time derivative of the mass \eqref{L2} and the assumption
\begin{gather}
\label{exp2}
\|f(t)\|_{L^2(\Omega)}^2\le\eps_\star\big(T_0-t\big)_+^\frac{2\delta-1}{1-\delta},
\end{gather}
for almost every $t>0.$ We then obtain \eqref{exp1} and we then apply \cite[Lemma~5.2]{MR4053613} to obtain the extinction of the solution at time $T_0.$ Again, we have to consider strong solutions. But the key assumption \eqref{exp2} cannot be obtained for a smooth sequence $(f_n)_{n\in\N}$
that approaches the external source $f.$ Rather, we first prove a more general result (Lemma~\ref{lemden2} below) than \cite[Lemma~5.2]{MR4053613}, which permits us to prove Property~\ref{thmext2} by density.

\begin{lem}
\label{lemden1}
Let $I$ be an interval $($not necessarily open$)$ with $-\infty\le\inf I<\sup I\le\infty,$ let $1\le p<\infty,$ let $X$ be a Banach space and let $f\in L^p_\loc(I;X).$ Then there exist $(f_n)_{n\in\N}\subset\Dr(I;X)$ and $g\in L^p_\loc(I;\R)$ such that,
\begin{gather}
\label{lemden11}
f_n\xrightarrow[n\to\infty]{L^p_\loc(I;X)}f,								\\
\label{lemden12}
\text{for a.e.}\, t\in I, \; f_n(t)\xrightarrow[n\to\infty]{X}f(t),					\\
\label{lemden13}
\text{for a.e.}\, t\in I \text{ and any } n\in\N, \; \|f_n(t)\|_X\le g(t),				\\
\label{lemden14}
\text{for any } n\in\N, \; \supp f_n\subset{\supp f+\ovl B\left(0,\frac1n\right)}.
\end{gather}
If, in addition, $f\in L^p(I;X)$ then
\begin{gather}
\label{lemden15}
f_n\xrightarrow[n\to\infty]{L^p(I;X)}f \text{ and } g\in L^p(I;\R).
\end{gather}
Finally, if for some $q\in[1,\infty]$ and a Banach space $Y,$ $f\in L^q(I;Y)$ then $(f_n)_{n\in\N}\subset\Dr(I;Y)$ and
\begin{gather}
\label{lemden16}
\|f_n\|_{L^q(I;Y)}\le\|f\|_{L^q(I;Y)},
\end{gather}
for any $n\in\N.$
\end{lem}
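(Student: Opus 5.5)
The plan is to use a truncation followed by a mollification in time. The two steps have to be arranged so that three things hold at once: the approximants have compact support inside $I$, their support stays close to $\supp f$, and they are dominated by a single function $g$.

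\textbf{Truncation.} Write $I$ as an increasing union of compact intervals $K_n=[\alpha_n,\beta_n]\subset I$ whose endpoints converge to $\inf I$ and $\sup I$. When an endpoint belongs to $I$, take $K_n$ to stop at a distance $\frac1n$ from it, so that $\Dr(I;X)$ means smooth functions compactly supported in $I$; this point will be checked carefully. Set $\tilde f_n=f\1_{K_n}$, extended by $0$ outside $I$. Then $\|\tilde f_n(t)\|_X\le\|f(t)\|_X$ and $\tilde f_n\to f$ pointwise on $I$.

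\textbf{Mollification.} Let $\rho\ge0$ be even, with $\int\rho=1$ and $\supp\rho\subset[-1,1]$, and set $\rho_\eps=\eps^{-1}\rho(\cdot/\eps)$. Define $f_n=\rho_{\eps_n}*\tilde f_n$ with $\eps_n\le\frac1n$. Choose $\eps_n$ smaller than $\mathrm{dist}(K_n,\R\setminus I)$ when that distance is positive, so that $f_n\in\Dr(I;X)$ and
\[
\supp f_n\subset\supp\tilde f_n+\ovl B(0,\eps_n)\subset\supp f+\ovl B\left(0,\tfrac1n\right),
\]
which gives~\eqref{lemden14}. Minkowski's inequality for Bochner integrals gives $\|f_n\|_{L^q(Y)}\le\|\tilde f_n\|_{L^q(Y)}\le\|f\|_{L^q(I;Y)}$ for every $q$ and every $Y$ with $f\in L^q(I;Y)$, which is~\eqref{lemden16}. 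The same construction is done once, with the same $\rho$ and $\eps_n$, so $f_n$ takes values in $X\cap Y$ whenever $f$ does. Next, we have $\|f_n-f\|_{L^p(J)}\le\|\rho_{\eps_n}*(\tilde f_n-f)\|+\|\rho_{\eps_n}*f-f\|$. On a fixed compact $J\subset I$ the first term vanishes for large $n$, apart from a neighbourhood of size $\eps_n$. The second term tends to $0$ by the standard continuity of translations in $L^p(\R;X)$, applied to $f\1_{J'}$ with $J'\supset J$ slightly larger. This gives~\eqref{lemden11}, and also~\eqref{lemden15} when $f\in L^p(I;X)$, by applying the same argument globally.

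\textbf{Pointwise convergence and domination.} This is the main step. Pointwise a.e. convergence~\eqref{lemden12} follows from the vector-valued Lebesgue differentiation theorem: at every Lebesgue point $t$ of $f\1_{J'}$ we have $\|\rho_\eps*f(t)-f(t)\|_X\le\|\rho\|_\infty\,\eps^{-1}\int_{t-\eps}^{t+\eps}\|f(s)-f(t)\|_X\,\d s\to0$. For the dominating function take
\[
g(t)=\|\rho\|_\infty\,\mathcal Mh(t),\qquad h=\|f\|_X\1_I,
\]
where $\mathcal M$ is the Hardy--Littlewood maximal function; then $\|f_n(t)\|_X\le\rho_{\eps_n}*h(t)\le g(t)$. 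The obstacle is that $\mathcal M$ is bounded on $L^p$ only for $p>1$, and $g$ need not even be locally integrable when $p=1$. So the fallback is the following. First pass to a subsequence with $\sum_n\|f_n-f\|_{L^p(K_j)}<\infty$ for every $j$; a diagonal extraction makes this possible. Then set
\[
g=\|f\|_X+\sum_n\|f_n-f\|_X ,
\]
which belongs to $L^p_\loc(I)$ by the triangle inequality in $L^p$, and lies in $L^p(I)$ under the global assumption. Since convergent series have terms tending to zero, this subsequence also yields~\eqref{lemden12} directly, and it preserves~\eqref{lemden14} and~\eqref{lemden16}. Relabelling $1/n$ along the subsequence keeps~\eqref{lemden14}, because $\eps_{n_k}\le\frac1{n_k}\le\frac1k$.
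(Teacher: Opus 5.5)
Your proof is correct and follows essentially the paper's route. You truncate to subintervals exhausting $I$, mollify at scale at most $\frac1n$, and then extract a subsequence along which $f_n\to f$ a.e.\ with a common majorant. The paper gets this last step by citing the partial converse of the dominated convergence theorem; you reprove it with $g=\|f\|_X+\sum_n\|f_n-f\|_X$. Your compact $K_n$ also handle unbounded $I$, where the paper's $I_n$ are unbounded and do not give compactly supported approximants.

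One small fix is needed. Impose $\sum_n\|f_n-f\|_{L^p(J_j)}<\infty$ on compacts $J_j$ exhausting $I$ itself, including any endpoint that belongs to $I$ (e.g.\ $J_j=[0,j]$ when $I=[0,\infty)$). Your $K_j$ stay $\frac1j$ away from such endpoints, so summability on them only gives $g\in L^p_\loc$ of the interior of $I$. Your $L^p_\loc(I)$ convergence already permits the corrected choice.
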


\begin{proof*}
Let $f\in L^p_\loc(I;X).$ Let for each $n\in\N,$ $I_n=\left(\inf I+\frac1n,\sup I-\frac1n\right).$ Finally, let $(\rho_n)_{n\in\N}$ be a sequence of mollifiers. Let us denote by $\wt f$ the extension of $f$ by $0$ outside $I.$ Let $n,j\in\N.$ We define $g_{n,j}=\rho_j\star(\wt f\1_{I_n})_{|I}.$ It is well-known that for any $n\in\N,$ $(g_{n,j})_{j>n}\subset\Dr(I;X)$ and $g_{n,j}\xrightarrow{L^p(I;X)}f\1_{I_n},$ as $j\to\infty.$ See, for instance, Droniou~\cite[Th\'{e}or\`{e}me~1.7.1, p.27]{droniou}. See also Brezis~\cite{MR2759829} (Proposition~4.18, p.106 and Theorem~4.22, p.109). It follows that there exists an increasing function $\psi:\N\tends\N$ such that for any $n\in\N,$ setting $f_n=g_{n,\psi(n)},$ we have $f_n-f\1_{I_n}\xrightarrow{L^p(I;X)}0,$ as $n\to\infty.$ By the partial converse of the dominated convergence theorem for vector-valued functions (Droniou~\cite[Th\'{e}or\`{e}me~1.3.4, p.16]{droniou}), we may assume, by renumbering the sequence if necessary, that for a.e.\,$t\in I,$ $f_n(t)-f\1_{I_n}(t)\xrightarrow{X}0,$ as $n\to\infty,$ and $\|f_n-f\1_{I_n}\|_X\le g\in L^p(I;\R),$ a.e.\,in $I$ and for any $n\in\N.$ Since for any compact interval $J\subset I$ and $t\in\overset{\circ}I$ (the interior of $I),$ there exists $n_0\in\N$ such that for any $n>n_0,$ $J\subset I_n$ and $t\in I_n,$ we easily conclude that $(f_n)_{n\in\N}\subset\Dr(I;X)$ satisfies \eqref{lemden11} and \eqref{lemden12} (and also \eqref{lemden15} if $f\in L^p(I;X)).$ Since for any $n\in\N,$ $\supp\rho_n=\ovl B\left(0,\frac1n\right),$ \eqref{lemden14} comes from a classical result of the convolution of two functions (Brezis~\cite[Proposition~4.18]{MR2759829}). Finally, if $f\in L^q(I;Y)$ for some $q\in[1,\infty]$ and a Banach space $Y,$ then we have that $(f_n)_{n\in\N}\subset\Dr(I;Y)$ and \eqref{lemden16} comes from Young's inequality for vector-valued functions (Droniou~\cite[Proposition~1.7.1, p.25]{droniou}).
\medskip
\end{proof*}

\begin{lem}
\label{lemden2}
Let $\alpha,\delta>0.$ Let $g\in L^1_\loc([0,\infty);\R)$ be a nonnegative function and let $t_0\ge0.$
\begin{enumerate}
\item
\label{lemden21}
For any $z_0\ge0,$ there exists a unique solution $z\in W^{1,1}_\loc\big([t_0,\infty);\R\big)$ to
\begin{gather}
\label{lemden211}
\begin{cases}
\forall t\ge t_0, \; z(t)\ge0,						\medskip \\
\text{for a.e.} \; t>t_0, \; z^\p(t)+\alpha z(t)^\delta=g(t),
\end{cases}
\end{gather}
such that
\begin{gather}
\label{lemden212}
z(t_0)=z_0.
\end{gather}
Let $g_1,g_2\in L^1_\loc([0,\infty);\R)$ be nonnegative functions and let $z_1,z_2\in W^{1,1}_\loc\big([t_0,\infty);\R\big)$ be solutions to
\begin{gather}
\label{lemden213}
\begin{cases}
\forall t\ge t_0, \; z_j(t)\ge0,						\medskip \\
\text{for a.e.} \; t>0, \; z_j^\p(t)+\alpha z_j(t)^\delta=g_j(t),
\end{cases}
\end{gather}
for $j\in\{1,2\}.$ Then,
\begin{gather}
\label{lemden214}
|z_1(t)-z_2(t)|\le|z_1(s)-z_2(s)|+\vint_s^t|g_1(\sigma)-g_2(\sigma)|\d\sigma,
\end{gather}
for any $t\ge s\ge t_0.$
\item
\label{lemden22}
Let $z\in W^{1,1}_\loc\big([t_0,\infty);\R\big)$ be a solution to \eqref{lemden211} and let $y\in W^{1,1}_\loc\big([t_0,\infty);\R\big)$ be a nonnegative solution to
\begin{gather}
\label{lemden221}
\text{for a.e.} \; t>t_0, \; y^\p(t)+\alpha y(t)^\delta\le g(t),
\end{gather}
If for some $t_\star\in[t_0,T),$ $y(t_\star)\le z(t_\star)$ then
\begin{gather}
\label{lemden222}
\forall t\ge t_\star, \; y(t)\le z(t).
\end{gather}
\end{enumerate}
\end{lem}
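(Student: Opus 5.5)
The plan is to reduce everything to a scalar ODE with a nonincreasing, one-sided Lipschitz nonlinearity. Set $F(z)=\alpha (z_+)^\delta$, which is continuous and nondecreasing on $\R$.

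\textbf{Existence.} I would solve the integral equation
\[
z(t)=z_0+\int_{t_0}^t\big(g(\sigma)-F(z(\sigma))\big)\,\d\sigma .
\]
Since $F$ is continuous, Carath\'eodory's theorem gives a local solution in $W^{1,1}_\loc$. It cannot blow up, because $z(t)\le z_0+\int_{t_0}^t g$, and $z'\ge-\alpha (z_+)^\delta$ keeps $z$ bounded below on bounded intervals. Hence the solution is global on $[t_0,\infty)$.

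Nonnegativity comes from the same device used in the next step. Multiply the equation by $-\1_{\{z<0\}}$ and use $\frac{\d}{\d t}z_-=-\1_{\{z<0\}}z'$ a.e. (chain rule for Lipschitz functions of $W^{1,1}$ functions). On $\{z<0\}$ we have $F(z)=0$, so
\[
\frac{\d}{\d t}z_-=-\1_{\{z<0\}}g\le0 .
\]
Since $z_-(t_0)=0$, this gives $z\ge0$. Therefore $z$ solves \eqref{lemden211}--\eqref{lemden212}.

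\textbf{Contraction estimate \eqref{lemden214}.} Let $w=z_1-z_2\in W^{1,1}_\loc$. Then $|w|\in W^{1,1}_\loc$ and $|w|'=\operatorname{sign}(w)\,w'$ a.e., with the convention that $|w|'=0$ a.e. on $\{w=0\}$. This gives
\[
|w|'=-\alpha\,\operatorname{sign}(w)\big(z_1^\delta-z_2^\delta\big)+\operatorname{sign}(w)(g_1-g_2).
\]
The first term is $\le0$ because $s\mapsto s^\delta$ is nondecreasing on $[0,\infty)$, so $|w|'\le|g_1-g_2|$ a.e. Integrating from $s$ to $t$ gives \eqref{lemden214}.

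\textbf{Uniqueness.} Take $g_1=g_2=g$ and $s=t_0$ in \eqref{lemden214}; two solutions with the same initial value then coincide.

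\textbf{Comparison (Property~\ref{lemden22}).} Here I would run the same argument with the positive part. Let $w=y-z$. Then $w_+\in W^{1,1}_\loc$ and
\[
(w_+)'=\1_{\{w>0\}}w'\le \1_{\{w>0\}}\big(-\alpha(y^\delta-z^\delta)\big)\le0
\]
a.e., since $y>z\ge0$ on $\{w>0\}$. As $w_+(t_\star)=0$, we get $w_+\equiv0$ on $[t_\star,\infty)$, which is \eqref{lemden222}. (The bound ``$T$'' in the statement is immaterial here.)

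\textbf{Main obstacle.} This is not the monotonicity itself but its rigorous justification. One needs the chain rule for $|\cdot|$ and $(\cdot)_+$ on $W^{1,1}_\loc$ functions, namely absolute continuity together with $w'=0$ a.e. on level sets. One also needs to check that $s^\delta$ with $\delta>0$, possibly $\delta<1$ and hence non-Lipschitz at $0$, causes no trouble. It does not, because only monotonicity is used, never a Lipschitz bound. This is exactly why uniqueness holds despite the non-Lipschitz nonlinearity: it is forward-in-time uniqueness, and dissipativity is what guarantees it.
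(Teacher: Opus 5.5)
Your proof is correct and follows essentially the same approach as the paper: Carath\'eodory existence for the truncated nonlinearity $\alpha(z_+)^\delta$, an a priori bound that rules out blow-up, and monotonicity of $s\mapsto s^\delta$ for the contraction estimate, uniqueness and comparison. The only difference is in execution: the paper argues by contradiction at a first crossing time for nonnegativity and for comparison, and multiplies by $z_1-z_2$ to get \eqref{lemden214}, whereas you use the chain rule for $z_-$, $|w|$ and $w_+$ on absolutely continuous functions, which gives \eqref{lemden214} directly.
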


\begin{proof*}
Let $\alpha,\delta>0.$ Let $g\in L^1_\loc([0,\infty);\R)$ with $g\ge0,$ a.e.\,in $(0,\infty),$ and let $t_0\ge0.$ \\
\textbf{Proof of Property~\ref{lemden21}.}
Let $([t_0,T_\M),z)$ be any maximal solution to \eqref{lemden211} with $T_\M<\infty.$ Then for any $t\in[t_0,T_\M),$
\begin{gather}
\label{demlemden211}
\forall t\in[t_0,T_\M), \; 0\le z(t)\le z(t_0)+\vint_0^{T_\M}g(s)\d s<\infty.
\end{gather}
Now, let $z_0\ge0$ and set for a.e.\,$t>0$ and any $x\in\R,$ $f(t,x)=g(t)-\alpha\left(x\1_{[0,\infty)}(x)\right)^\delta.$ Then by Carath\'{e}odory's Theorem and Zorn's Lemma, there exist $t_0<T_\M\le\infty$ and a maximal solution $z\in W^{1,1}_\loc\big([t_0,T_\M);\R\big)$ to $z^\p=f(\:.\:,z),$ a.e.\,on $(t_0,T_\M),$ that is
\begin{gather}
\label{demlemden212}
\text{for a.e.} \; t\in(t_0,T_\M), \; z^\p(t)+\alpha\left(z(t)\1_{\{z(t)\ge0\}}(t)\right)^\delta=g(t),
\end{gather}
such that $z(t_0)=z_0.$ In addition, the following blow-up alternative holds true: if $T_\M<\infty$ then $\vlim_{t\nearrow T_\M}|z(t)|=\infty.$ Now, assume by contradiction that for some $t_1\in(t_0,T_\M),$ $z(t_1)<0.$ Then, since $z(t_0)\ge0,$ we obtain by continuity the existence of a $T\in[t_0,T_\M)$ and of a $\delta\in(0,T_\M-T)$ such that $z(T)=0$ and for any $t\in(T,T+\delta],$ $z(t)<0.$ It then follows from \eqref{demlemden212} that $z^\p\ge0,$ a.e.\,on $(T,T+\delta),$ so that $0=z(T)\le z(T+\delta)<0,$ a contradiction. It follows that,
\begin{gather*}
\forall t\in[t_0,T_\M), \; z(t)\ge0,
\end{gather*}
and by \eqref{demlemden211} and the blow-up alternate, we obtain $T_\M=\infty.$ As a consequence, any maximal solution to \eqref{lemden211} is global. Now, let $g_1,g_2$ and $z_1,z_2$ be as in the statement of the lemma. Let $z=z_1-z_2$ and $g=g_1-g_2.$ It follows that,
\begin{gather*}
\text{for a.e.} \; t>t_0, \; z^\p(t)+\alpha(z_1(t)^\delta-z_2(t)^\delta)=g(t).
\end{gather*}
Multiplying by $z,$ using that $s\longmapsto\alpha s^\delta$ in increasing over $[0,\infty)$ and integrating, we get that $z$ satisfies,
\begin{gather*}
\forall t\ge t_0, \; |z_1(t)-z_2(t)|\le|z_1(s)-z_2(s)|+\vint_s^t|g_1(\sigma)-g_2(\sigma)|\d\sigma.
\end{gather*}
In particular, this implies uniqueness of the solution, and Property~\ref{lemden21} is proved.
\\
\textbf{Proof of Property~\ref{lemden22}.}
Let the assumptions be fulfilled. If \eqref{lemden222} does not hold then since $y(t_\star)\le z(t_\star),$ we have by continuity that there exist $t_\star\le T_\star<\infty$ and $\eps>0$ such that $y(T_\star)=z(T_\star)$ and $y(t)>z(t),$ for any $t\in(T_\star,T_\star+\eps).$ This leads with \eqref{lemden211} and \eqref{lemden221} to $y^\p\le z^\p,$ almost everywhere on $(T_\star,T_\star+\eps).$ Integrating over $(T_\star,t)$ for $t\in(T_\star,T_\star+\eps),$ we obtain that $y(t)\le z(t),$ for any $t\in[T_\star,T_\star+\eps],$ a contradiction. Hence the result.
\medskip
\end{proof*}

\noindent
Let us recall the following Gagliardo-Nirenberg inequality (Gagliardo~\cite{MR0109295}, Nirenberg~\cite{MR0109940}). Let $\Omega$ be an open subset of $\R^N$ and let $0\le m\le1.$ Then there exists $C_\GN=C_\GN(m,N)$ such that for any $u\in H^1_0(\Omega)\cap L^{m+1}(\Omega),$
\begin{gather}
\label{GN}
\|u\|_{L^2(\Omega)}^\frac{(N+2)-m(N-2)}2\le C_\GN\|u\|_{L^{m+1}(\Omega)}^{m+1}\|\nabla u\|_{L^2(\Omega)}^\frac{N(1-m)}2.
\end{gather}
It follows that,
\begin{align*}
	&	\; \|u\|_{L^2(\Omega)}^\frac{(N+2)-m(N-2)}2											\\
  \le	&	\; C_\GN\left(\|\nabla u\|_{L^2(\Omega)}^2+\|u\|_{L^{m+1}(\Omega)}^{m+1}\right)
				\left(\|\nabla u\|_{L^2(\Omega)}^2+\|u\|_{L^{m+1}(\Omega)}^{m+1}\right)^\frac{N(1-m)}4	\\
   =	&	\; C_\GN\left(\|\nabla u\|_{L^2(\Omega)}^2+\|u\|_{L^{m+1}(\Omega)}^{m+1}\right)^\frac{N(1-m)+4}4,
\end{align*}
and then
\begin{gather}
\label{demthmext0}
\|u\|_{L^2(\Omega)}^{2\delta}\le C_\GN^\frac4{N(1-m)+4}\left(\|\nabla u\|_{L^2(\Omega)}^2+\|u\|_{L^{m+1}(\Omega)}^{m+1}\right),
\end{gather}
for any $u\in H^1_0(\Omega)\cap L^{m+1}(\Omega),$ where $\delta$ is defined in Assumption~\ref{assext}. Finally, let us recall that if $(u,f)$ and $(\wt u, \wt f)$ are (strong or weak) solutions to \eqref{gl}--\eqref{glb} then
\begin{gather}
\label{propdep2}
\|u(t)-\wt u(t)\|_{L^2(\Omega)}\le\|u(s)-\wt u(s)\|_{L^2(\Omega)}+\vint_s^t\|f(\sigma)-\wt f(\sigma)\|_{L^2(\Omega)}\d\sigma,
\end{gather}
for any $t\ge s\ge0$ (\cite[Proposition~2.6]{Part1}). Now, we are able to prove Theorem~\ref{thmext}.

\medskip
\begin{vproof}{of Theorem~\ref{thmext}.}
Let $f,$ $u_0,$ $u$ and $M$ be as in the statement of the theorem and set for any $t\ge0,$ $y(t)=\|u(t)\|_{L^2(\Omega)}^2.$
\\
\textbf{Proof of Property~\ref{thmext1}.}
We only show that $u$ satisfies \eqref{thmext11}, from which \eqref{thmext13} and \eqref{thmext14} will follow. We first assume that $f\in\Dr((0,\infty);L^2(\Omega)\big)$ and $u_0\in\Dr(\Omega),$ so that $u$ is an $H^2$-solution (\cite[Theorem~2.14]{Part1}). We have by \eqref{L2}, \eqref{demthmext0}, \eqref{assext1} and  \eqref{assext2} that for a.e.\,$t>T_0,$
\begin{gather}
\label{demthmext1}
y^\p(t)+2\,\alpha\,y(t)^\delta\le0,
\end{gather}
where $\alpha=MC_\GN^{-\frac4{N(1-m)+4}}.$ After integration, we obtain that for any $t\ge T_0,$
\begin{gather}
\label{demthmext2}
\|u(t)\|_{L^2(\Omega)}^2
\le\left(\|u(T_0)\|_{L^2(\Omega)}^\frac{4(1-m)}{N(1-m)+4}-\lambda MC_\GN^{-\frac4{N(1-m)+4}}(t-T_0)\right)_+^\frac{N(1-m)+4}{2(1-m))},
\end{gather}
which is \eqref{thmext11}.
\\
\textbf{End of the proof when} $\bs{m>0.}$ Now, we consider the general case: $u_0\in L^2(\Omega)$ and $f\in L^1\big((0,\infty);L^2(\Omega)\big)$ which satisfies \eqref{assext1}. We apply Lemma~\ref{lemden1}: let $(\vphi_n)_{n\in\N}\subset\Dr(\Omega)$ and $(f_n)_{n\in\N}\subset\Dr\big((0,\infty);L^2(\Omega)\big)$ be such that, 
\begin{gather}
\label{demthmext3}
\vphi_n\xrightarrow[n\to\infty]{L^2(\Omega)}u_0 \; \text{ and } \; f_n\xrightarrow[n\to\infty]{L^1((0,\infty);L^2(\Omega))}f.
\end{gather}
Finally, for each $n\in\N,$ let $(u_n,f_n)$ be the $H^2$-solution to \eqref{gl}--\eqref{glb} such that $u_n(0)=\vphi_n.$ By \eqref{assext1}, \eqref{lemden14} and \eqref{demthmext2}, we get that for any $n\in\N$ and $t\ge T_0+\frac1n,$
\begin{gather}
\label{demthmext4}
\|u_n(t)\|_{L^2(\Omega)}
\le\left(\left\|u_n\left(T_0+\frac1n\right)\right\|_{L^2(\Omega)}^\frac{4(1-m)}{N(1-m)+4}
-\lambda MC_\GN^{-\frac4{N(1-m)+4}}\left(t-T_0-\frac1n\right)\right)_+^\frac{N(1-m)+4}{4(1-m))}.
\end{gather}
By \eqref{demthmext3} and \eqref{propdep2}, we may pass to the limit in \eqref{demthmext4}, so that $u$ satisfies \eqref{thmext11}.
\\
\textbf{End of the proof when} $\bs{m=0}.$
Assume that $u_0\in L^2(\Omega)$ and $f\in L^1_\loc\big([0,\infty);L^2(\Omega)\big)$ which satisfies \eqref{assext2}. By Lemma~\ref{lemden1}, there exist $(\vphi_n)_{n\in\N}\subset\Dr(\Omega)$ and $(f_n)_{n\in\N}\subset\Dr\big((0,\infty);L^2(\Omega)\big)$ such that, 
\begin{gather}
\label{demthmext5}
\vphi_n\xrightarrow[n\to\infty]{L^2(\Omega)}u(T_0) \text{ and for any } T>T_0, \; f_n\xrightarrow[n\to\infty]{L^1((T_0,T);L^2(\Omega))}f,	\\
\label{demthmext6}
\text{for any } n\in\N, \|f_n\|_{L^\infty((T_0,\infty);L^\infty(\Omega))}\le\|f\|_{L^\infty((T_0,\infty);L^\infty(\Omega))}.
\end{gather}
Let $n\in\N.$ Let $(u_n,f_n)$ be the $H^2$-solution to \eqref{gl}--\eqref{glb} such that $u_n(T_0)=\vphi_n$ (which is possible by uniqueness of the solution and the invariance of \eqref{gl} by time translation). By \eqref{demthmext6}, each $f_n$ satisfies \eqref{assext2}. Then by \eqref{demthmext2}, we have for any $n\in\N$ and $t\ge T_0,$
\begin{gather*}
\|u_n(t)\|_{L^2(\Omega)}^2
\le\left(\|u_n(T_0)\|_{L^2(\Omega)}^\frac{4(1-m)}{N(1-m)+4}-\lambda M_nC_\GN^{-\frac4{N(1-m)+4}}(t-T_0)\right)_+^\frac{N(1-m)+4}{2(1-m))},
\end{gather*}
where $M_n=\min\left\{\cos\theta,\Re(a\e^{\vi\theta})-\|f_n\|_{L^\infty((T_0,\infty);L^\infty(\Omega))}\right\}.$ By \eqref{demthmext6}, $M\le M_n,$ so that
\begin{gather}
\label{demthmext7}
\|u_n(t)\|_{L^2(\Omega)}^2
\le\left(\|u_n(T_0)\|_{L^2(\Omega)}^\frac{4(1-m)}{N(1-m)+4}-\lambda MC_\GN^{-\frac4{N(1-m)+4}}(t-T_0)\right)_+^\frac{N(1-m)+4}{2(1-m))},
\end{gather}
for any $n\in\N$ and $t\ge T_0.$ By \eqref{demthmext5} and \eqref{propdep2}, we may pass to the limit in \eqref{demthmext7} and then $u$ satisfies \eqref{thmext11}.
\\
\textbf{Proof of Property~\ref{thmext2}.}
We first note by \eqref{thmext21} that $f\in L^p\big((0,\infty);L^2(\Omega)\big),$ where $p=\frac{2\delta}{2\delta-1}>1.$ By Lemma~\ref{lemden1}, there exist $h\in L^p((0,\infty);\R),$ $(\vphi_n)_{n\in\N}\subset\Dr(\Omega)$ and $(f_n)_{n\in\N}\subset\Dr\big((0,\infty);L^2(\Omega)\big)$ such that, 
\begin{gather}
\label{demthmext8}
\vphi_n\xrightarrow[n\to\infty]{L^2(\Omega)}u_0 \; \text{ and } \; f_n\xrightarrow[n\to\infty]{L^p((0,\infty);L^2(\Omega))}f,	\\
\label{demthmext9}
\text{for a.e.}\, t>0 \; f_n(t)\xrightarrow[n\to\infty]{L^2(\Omega)}f(t),											\\
\label{demthmext10}
\text{for a.e.}\, t>0 \text{ and any } n\in\N, \; \|f_n(t)\|_{L^2(\Omega)}\le h(t).
\end{gather}
For each $n\in\N,$ let $(u_n,f_n)$ be the $H^2$-solution to \eqref{gl}--\eqref{glb} such that $u_n(0)=\vphi_n,$ and set for any $t\ge0,$ $y_n(t)=\|u_n(t)\|_{L^2(\Omega)}^2.$ Let $n\in\N.$ We have by \eqref{L2}, \eqref{demthmext0} and Cauchy-Schwarz' inequality that for a.e.\,$t>0,$
\begin{gather}
\label{demthmext11}
y_n^\p(t)+2\,\alpha\,y_n(t)^\delta\le2\,\|f_n(t)\|_{L^2(\Omega)}\,y_n(t)^\frac12,
\end{gather}
where $\alpha=\min\left\{\cos\theta,\Re(a\e^{\vi\theta})\right\}C_\GN^{-\frac4{N(1-m)+4}}.$ Now, we set
\begin{gather*}
\eps_\star=\min\left\{(2\delta-1)^{-\frac{2\delta-1}{\delta}}(\alpha\delta)^\frac1{1-\delta}(1-\delta)^\frac{2\delta-1}{\delta(1-\delta)},\alpha\,\delta\,(1-\delta)\right\}.
\end{gather*}
Applying Young's inequality to \eqref{demthmext11} we arrive at,
\begin{gather*}
y_n^\p(t)+2\,\alpha\,y_n(t)^\delta\le\frac{2\delta-1}\delta(\alpha\delta)^{-\frac1{2\delta-1}}\|f_n(t)\|_{L^2(\Omega)}^\frac{2\delta}{2\delta-1}+\alpha\,y_n(t)^\delta,
\end{gather*}
for a.e.\,$t>0,$ and then
\begin{gather*}
\text{for a.e.} \; t>0, \; y_n^\p(t)+\alpha y_n(t)^\delta\le g_n(t),
\end{gather*}
where $g_n(t)=\frac{2\delta-1}\delta(\alpha\delta)^{-\frac1{2\delta-1}}\|f_n(t)\|_{L^2(\Omega)}^\frac{2\delta}{2\delta-1}.$ Let $z_n\in W^{1,1}_\loc\big([0,\infty);\R\big),$ with $z_n\ge0$ everywhere in $[0,\infty),$ be the unique solution to
\begin{gather*}
\text{for a.e.} \; t>0, \; z_n^\p(t)+\alpha z_n(t)^\delta=g_n(t),
\end{gather*}
such that $z_n(0)=y_n(0).$ By Lemma~\ref{lemden2}, we have for any $t\ge0,$ $y_n(t)\le z_n(t).$ By \eqref{demthmext9}, \eqref{demthmext10} and the dominated convergence Theorem, we have that $g_n\xrightarrow[n\to\infty]{L^1((0,\infty);\R)}g,$ where for a.e.\,$t>0,$
\begin{gather*}
g(t)=\frac{2\delta-1}\delta(\alpha\delta)^{-\frac1{2\delta-1}}\|f(t)\|_{L^2(\Omega)}^\frac{2\delta}{2\delta-1}.
\end{gather*}
We then infer with the help of \eqref{demthmext8}, \eqref{propdep2} and Lemma~\ref{lemden2} that
\begin{gather}
\label{demthmext12}
\forall t\ge0, \; y(t)\le z(t),
\end{gather}
where $z\in W^{1,1}_\loc\big([0,\infty);\R\big)$ is the unique nonnegative solution to
\begin{gather*}
\text{for a.e.} \; t>0, \; z^\p(t)+\alpha z(t)^\delta=g(t),
\end{gather*}
such that $z(0)=y(0).$ By \eqref{thmext21}, it follows that
\begin{gather*}
\text{for a.e.} \; t>0, \; z^\p(t)+\alpha z(t)^\delta\le z_\star\big(T_0-t\big)_+^\frac\delta{1-\delta},
\end{gather*}
where $z_\star=\big(\alpha\delta^\delta(1-\delta)\big)^\frac1{1-\delta}.$ Finally, let $\zeta_\star=(\alpha\delta(1-\delta)T_0)^\frac1{1-\delta},$ and let $\zeta\in W^{1,1}_\loc\big([0,\infty);\R\big),$ with $\zeta\ge0$ everywhere in $[0,\infty),$ be the unique solution to
\begin{gather*}
\text{for a.e.} \; t>0, \; \zeta^\p(t)+\alpha\zeta(t)^\delta=z_\star\big(T_0-t\big)_+^\frac\delta{1-\delta},
\end{gather*}
such that $\zeta(0)=\zeta_\star.$ By \eqref{thmext21}, we have $z(0)\le\zeta_\star$ and then Lemma~\ref{lemden2} implies that
\begin{gather}
\label{demthmext13}
\forall t\ge0, \; z(t)\le\zeta(t),
\end{gather}
Finally, by the uniqueness of the solution, we obtain
\begin{gather}
\label{demthmext14}
\forall t\ge0, \; \zeta(t)=\zeta_\star T_0^{-\frac1{1-\delta}}\left(T_0-t\right)_+^\frac1{1-\delta}.
\end{gather}
Putting together \eqref{demthmext12}--\eqref{demthmext14}, we get that
\begin{gather*}
\forall t\ge0, \; y(t)\le\zeta_\star T_0^{-\frac1{1-\delta}}\left(T_0-t\right)_+^\frac1{1-\delta},
\end{gather*}
from which the result follows.
\medskip
\end{vproof}

\begin{vproof}{of Proposition~\ref{propm1}.}
By density (in particular \eqref{lemden14}) and continuous dependence \eqref{propdep2}, we may assume $f\in\Dr\big((0,\infty);L^2(\Omega)\big),$ $u_0\in\Dr(\Omega)$ so that $u$ is an $H^2$-solution (\cite[Theorem~2.14]{Part1}). We then have by \eqref{L2},
\begin{gather*}
\forall t\ge T_0, \; \frac12\frac{\d}{\d t}\|u(t)\|_{L^2(\Omega)}^2+\Re(a\e^{\vi\theta})\|u(t)\|_{L^2(\Omega)}^2\le0,
\end{gather*}
from which the result follows.
\end{vproof}

\begin{vproof}{of Theorem~\ref{thm0w}.}
By \eqref{propdep2}, density and Proposition~\ref{propm1}, we may assume that $u_0\in\Dr(\Omega),$ $f\in\Dr\big((0,\infty);L^2(\Omega)\big)$ and $m<1.$ The result then comes from Theorem~\ref{thmext}.
\medskip
\end{vproof}

\baselineskip .4cm


\end{document}